\documentclass[a4paper, 11pt]{article}
\usepackage[dvipdfmx]{hyperref} 
\usepackage{srcltx}
\usepackage{indentfirst}
\usepackage{graphicx}
\usepackage{overpic}
\usepackage{multirow}
\usepackage[dvipdfmx]{hyperref}
\usepackage[hang]{subfigure}
\usepackage{amsmath, amssymb, amsthm}
\usepackage{color}
\usepackage[mathscr]{euscript}
\usepackage{mathrsfs} 

\newtheorem{theorem}{Theorem}
\newtheorem{lemma}{Lemma}

\newtheorem{corollary}{Corollary}

\newcommand\bbR{\mathbb{R}}
\newcommand\bbZ{\mathbb{Z}}
\newcommand\bbN{\mathbb{N}}

\newcommand\be{\boldsymbol{e}}

\newcommand\bB{{\bf{B}}}

\newcommand\dd{\,\mathrm{d}}
\newcommand\de{\mathrm{e}}

\newcommand \Vw{\mathcal{V}}  
\newcommand \VwR{\tilde{\mathcal{V}}} 
\newcommand \Thth{\mathcal{T}^h} 
\newcommand \bfThth{\mathbf{\it T}^h} 
\newcommand \Rv{ \mathbb{R}_v } 
\newcommand \Rh{ R^h } 
 %
\newcommand \ThetaV{ {\Theta[V]} } 

\newcommand \FTFF[3]{\mathcal{F}_{ {#1}\rightarrow{#2}  } \left( #3 \right) }

\newcommand \IFTFF[3]{\mathcal{F}^{-1}_{ {#1}\rightarrow{#2}  } \left( #3 \right) }
\newcommand \zetah { {\zeta_h} } 

\newcommand \LLSpaceRv { {L^1 \left( (0,l); L^2(\Rv) \right)} } 
\newcommand \LLSpaceRy { {L^1 \left( (0,l); L^2(\bbR_y) \right)} }

\newcommand\pd[2]{\dfrac{\partial {#1}}{\partial {#2}}}
\newcommand\opd[2]{\dfrac{\dd {#1}}{\dd {#2}}}

\newcommand \ri{\mathrm{i}}

\newcommand\sinc{ {\rm sinc}\,}
\newcommand\supp{ {\rm supp}\,}

\newcommand \bz{ \mathbf{z} } 
\newcommand \bA{ \mathbf{A} } %
\newcommand \bh{ \mathbf{h} } %
\newcommand \br{ \mathbf{r} } %
\newcommand \bff{ {\mathbf{e}^h} }

\numberwithin{equation}{section}
\numberwithin{figure}{section}

\setlength{\oddsidemargin}{0cm} 
\setlength{\evensidemargin}{0cm}
\setlength{\textwidth}{150mm}
\setlength{\textheight}{230mm}

\graphicspath{{images/}}

{\theoremstyle{remark} }

\title{Convergence of Semi-discrete Stationary Wigner Equation with
  Inflow Boundary Conditions}

\author{Ruo Li\thanks{HEDPS \& CAPT, LMAM \& School of Mathematical
    Sciences, Peking University, Beijing, China, email: {\tt
      rli@math.pku.edu.cn}.}, ~~ Tiao Lu\thanks{HEDPS \& CAPT, LMAM \&
    School of Mathematical Sciences, Peking University, Beijing,
    China, email: {\tt tlu@math.pku.edu.cn}.},
    ~~ Zhangpeng Sun \thanks{School of Mathematical Sciences, 
	  Peking University, Beijing, China, 
	  email: {\tt sunzhangpeng@pku.edu.cn}.}
}

\begin{document}
\maketitle

\begin{abstract}
  Making use of the Whittaker-Shannon interpolation formula with
  shifted sampling points, we propose in this paper a well-posed
  semi-discretization of the stationary Wigner equation with inflow
  BCs. The convergence of the solutions of the discrete
  problem to the continuous problem is then analysed, providing
  certain regularity of the solution of the continuous problem.

  \vspace*{4mm}

  \noindent {\bf Keywords:} Stationary Wigner equation, inflow boundary
  conditions, well-posedness.

  \vspace*{4mm}
\noindent{\bf AMS subject classifications:}  35Q40, 65N35

\end{abstract}

\section{Introduction} \label{sec:intro} 

The Wigner equation is one of the quantum frameworks equivalent to the
Schr\"odinger equation in some sense. The Wigner function is a
quasi-probability distribution introduced by Wigner in 1932 to study
quantum corrections to classical statistical mechanics
\cite{Wigner1932}. A great many applications of the Wigner equation
arose in pervasive fields, including statistical mechanics, quantum
optics, quantum chemistry, etc. Particularly, in the simulation of
nano-scale semiconductor devices, the Wigner equation was regarded as
a promising tool since it is the counterpart of the Boltzmann equation
in quantum mechanics. In 1987, Frensley \cite{Frensley1987}
numerically solved the stationary Wigner equation with inflow boundary
condition and successfully reproduced the negative differential
resistance phenomenon, which is a typical quantum effect verified by
experiments. This work motivated a lot of later work on numerical
simulations based on the Wigner equation \cite{Jensen1990,
  Tsuchiya1991, Shih1994, GeKo05, Querlioz2006, ShLu09, LiLuSun2014}. 
In these work, different boundary conditions are proposed for the 
stationary Wigner equation, e.g., absorbing boundary conditions 
\cite{Arnold1994} and device adaptive inflow boundary conditions
\cite{JiangLuCai2013}. Among these boundary conditions, the inflow
boundary condition is the most popular one due to its simplicity.

In spite of its popularity, the Wigner equation with inflow
boundary conditions (BCs) is far from thoroughly studied from a 
mathematical point of view. Numbers of mathematicians were then 
attracted to the study of the Wigner equation with inflow BCs, while
there are seldom results on the well-posedness of the problem yet. For
the time-dependent Wigner equation with inflow BCs,
well-posedness has been studied for the linear case
\cite{Markowich1989} and the nonlinear case
\cite{MarkowichDegond1990}, respectively. To the authors knowledge,
only one study has been carried out on the stationary Wigner equation,
where a rather involved technical method was used to construct a
solution \cite{Lange1997}. The stationary problem is even interesting
since it is applied to the current-voltage curve computation of
semiconductor devices in nano-scale, while a rigid proof of the unique
solvability has not yet been given. On the other hand, there are
comparatively fruitful studies in the numerical approximation
aspect. The well-posedness of the semi-discrete stationary Wigner
equation with inflow BCs has been proved in
\cite{ALZ00} if the velocity interval centered at zero is
neglected. The numerical convergence for the initial value problem has
been studied for the transient Wigner equation \cite{Ringhofer1992,
  Arnold1996, goudon02}. Using the Whittaker-Shannon interpolation
formula, Goudon in \cite{goudon02} constructed a converge sequence,
which are the solution of a semi-discrete version of the Wigner
equation, to approximate the solution of the transient Wigner
equation, in case that there exists a unique smooth solution of the
continuous problem.

Motivated by the work in \cite{ALZ00} and \cite{goudon02}, we consider
in this paper the convergence of the semi-discrete solution of the
stationary Wigner equation with the inflow BCs. The
Whittaker-Shannon interpolation formula in \cite{goudon02} is not able
to be applied to stationary problem, since it results in a singular
semi-discrete problem. We introduce a shift of the sampling points in
the Whittaker-Shannon interpolation formula thus the zero velocity is
excluded from the sampling points. Thus, the technique in \cite{ALZ00}
is applicable to prove the well-posedness of the semi-discrete problem
we propose. The well-posedness of the semi-discrete equation makes us
able to analyze the convergence of the solutions of the semi-discrete
problem to the continuous problem. It is proved that the convergence
rate depends only on the data and the regularity of the solution
of the continuous problem. As a necessary condition for any numerical
method, the well-posedness of the continuous problem definitely has to
be assumed, which provides us a solution with certain regularity thus
the numerical approximation is possible.

The rest part of this paper is arranged as follows. In Section 2, we
give the semi-discretization of the stationary Wigner equation with
inflow BCs based on Whittaker-Shannon interpolation
formula using shifted sampling points. In Section 3, we give an
estimate to the semi-discrete residual of the discretization as a
preparation to the final convergence result. In Section 4, the
convergence of the solution of the semi-discrete problem to the
continuous problem is clarified.  The Whittaker-Shannon
interpolation formula with shifted sampling points is collected in the
appendix for reference.

\section{Discretization}
We are considering the stationary dimensionless Wigner equation
\cite{Wigner1932}
\begin{equation}\label{eq:Wigner}
  v \pd{f(x,v)}{x}  - \ThetaV(f) = 0, 
\end{equation}
where the pseudo-differential operator $\ThetaV$ is defined by 
\begin{equation} \label{DefThetaV}
  \ThetaV(f)(x,v) = \ri \IFTFF{y}{v}{ D_V(x,y) \FTFF{v}{y}{f(x,v)} } , 
\end{equation} 
where $D_V(x,y)=V(x+y/2)-V(x-y/2)$ and $V(x)$ is the potential. The
Fourier transform of $u(v)$ and its inverse are standard as
\[
  \widehat{u}(y) =  \FTFF{v}{y}{u(v)} = \int_{\bbR} u(v) e^{-\ri v y } \dd y , 
\]
and
\[
  u(v) =  \IFTFF{y}{v}{\widehat{u}(y)} = \frac{1}{2\pi} \int_{\bbR}
  \widehat{u}(y) 
  e^{\ri v y } \dd y .  
\]
According to the convolution theorem of the Fourier transform, the
pseudo-differential operator defined in \eqref{DefThetaV} can be
written into   
\[
  \ThetaV(f) = \Vw(x,\cdot)\ast f(x,\cdot) 
  = \int_{\bbR} \Vw(x,v-v')f(x,v')\dd v' ,  
\]
where the Wigner potential $\Vw(x,v)$ is related to the potential
$V(x)$ through
\begin{equation} \label{VwDef}
  \Vw(x,v) = \ri \IFTFF{y}{v}{ D_V(x,y)} = \frac{\ri}{2\pi} \int_{-\infty}^{\infty}  
  D_V(x,y) \de^{\ri v y }\dd y.
\end{equation}
Then the Wigner equation is reformulated as
\begin{equation} \label{eq:cont}
  v \pd{f(x,v)}{x} - \int_{v' \in \bbR} \Vw(x,v-v') f(x,v') \dd v' =
  0,  \quad  (x,v) \in (0,l)\times \mathbb{R},  
\end{equation}
subject to the inflow boundary condition
\begin{equation} \label{eq:contBC}
  f(0,v) = f_b(v), \text{ if } v > 0, \quad 
  f(l,v) = f_b(v), \text{ if } v <0.  
\end{equation} 

We apply the Fourier transform to the Wigner equation \eqref{eq:cont}
and obtain
\begin{equation}\label{eq:WignerFT}
 \ri \pd{^2}{x \partial y} \widehat{f}(x,y)  - \ri D_V(x,y) \widehat{f}(x,y) = 0.  
\end{equation}
We introduce a smooth cutoff function $\zetah(y) \in
C_0^{\infty}(\bbR)$ as in \cite{Hormander1990} satisfying
\begin{equation} \label{zetaR}
  \zetah(y) = \zeta(y/\Rh),  \quad \Rh = \frac{1}{2h}  
\end{equation}
and
\[
  \left\{
  \begin{array}{l}
	 0 \leqslant  \zeta(y) \leqslant 1,  \\ 
	\zeta\left( y \right) =  \begin{cases}
	1, &\text{ on } B(0,1/2), \\
	0, &\text{ on } \bbR \backslash B(0,3/4).  
  \end{cases}
\end{array}
\right.
\]
And it is easy to see that 
\begin{equation} \label{zetahProp1} \zetah'(y)=0, \quad \text{if} \
  \ |y| \notin \left[ \frac{\Rh}{2}, \frac{3 \Rh}{4} \right].
\end{equation}
Furthermore, the derivative of the cutoff function $\zetah(y)$
\cite{Hormander1990} may satisfy
\begin{equation} \label{zetahProp2}
  |\zetah'(y)| \leq C_{\zeta} h ,  \quad \forall y \in \bbR ,   
\end{equation} 
where $C_{\zeta}$ is a constant independent of $h$. Multiplying
$\zetah(y)$ on both sides of \eqref{eq:WignerFT} yields
\begin{equation}
\zetah(y) \pd{^2}{x \partial y} \widehat{f}(x,y) 
- \zetah(y) D_V(x,y) \widehat{f}(x,y) = 0 . 
\end{equation}
Thus we have 
\begin{equation} \label{eq:WignerFTcutoff}
 \pd{^2}{x \partial y} ( \widehat{f}(x,y) \zetah(y) )  
 - D_V(x,y) \widehat{f}(x,y) \zetah(y) =
  \pd{}{x} \left( \widehat{f}(x,y) \zetah'(y)  \right).   
\end{equation}
Let
\[
  t^h(x,v)=\IFTFF{y}{v}{\widehat{f}(x,y)\zetah(y)},
\]
which is a function with a compactly supported Fourier transform,
precisely $\supp (\widehat{t^h}(x,y)) \subset \supp(\zetah(y)) \subset
B(0,\frac{3}{4}\Rh)$. According to the Shannon sampling theory,
$t^h(x,v)$ can be completely represented by the Whittaker-Shannon
interpolation formula \eqref{eq:Whittaker}
\begin{equation} \label{thxvSinc}
  t^h(x,v) = \sum_{n \in \bbZ} t^h(x,v_n) \sinc \left( \Rh 
	(v-v_n) 
  \right), 
\end{equation}
where $v_n =(n+1/2)\frac{\pi}{\Rh}$.  

We then apply the inverse Fourier transform to \eqref{eq:WignerFTcutoff} to
yield the equation of $t^h(x,v)$
\[
  v\pd{}{x} t^h(x,v) - \Theta[V] t^h(x,v) = \Thth(x,v),
\]
where 
\[
  \widehat{\Thth}(x,y) = \pd{}{x}\widehat{f}(x,y) \zetah'(y). 
\]
By Lemma \ref{lemma1}, we have that $\Theta[V] t^h(x,v) = \Theta[V
\chi_{B(0,\Rh)}] t^h(x,v)$ due to \eqref{convolution2}, thus
\begin{equation} \label{Eqthv}
  v\pd{}{x} t^h(x,v) - \Theta[V \chi_{B(0,\Rh)}] t^h(x,v) 
= \Thth(x,v).
\end{equation} 
By setting $v=v_n$ in \eqref{Eqthv}, we have
\begin{equation} \label{Eqthvn}
  v_n \opd{t^h(x,v_n)}{x} - (\Theta[V\chi_{B(0,\Rh)}] t^h)(x,v_n) 
= \Thth(x,v_n), \quad n \in \bbN.   
\end{equation} 
The Shannon sampling theorem tells one that \eqref{Eqthv} is
equivalent to the discrete-velocity equations \eqref{Eqthvn}. We point
out that
\[
(\Theta[V\chi_{B(0,\Rh)}] t^h)(x,v_n) = \frac{\pi}{\Rh}\sum_{m \in
  \bbZ} \VwR_{n-m} t^h(x, v_m),
\]
where $\VwR_n(x)$ is defined by
\[
\VwR_n(x) = \frac{\ri}{2\pi} \int_{B(0,\Rh)} D_V(x,y) e^{\ri y
  n\frac{\pi}{\Rh}} \dd y,
\]
and it is equal to the inverse Fourier transform of the truncated function
$\ri D_V(x,y)\chi_{B(0,R^h)}$ at velocity $\tilde{v}_n =n\pi/\Rh$
\begin{equation} \label{VwRDef}
  \VwR_n(x) = \ri \IFTFF{y}{v}{ D_V(x,y) \chi_{B(0,R^h)}(y)} 
  ( \tilde{v}_n ).  
\end{equation}
This allows us to reformulate \eqref{Eqthvn} as
\[
v_n \opd{t^h(x,v_n)}{x} - \frac{\pi}{\Rh}\sum_{m \in \bbZ} \VwR_{n-m}
t^h(x, v_m) = \Thth(x,v_n), \quad n \in \bbN.
\]

A reasonable problem one may be interested in is the case that
$\Thth(x,v)$ goes to zero as $h\rightarrow 0$. As a special setup, if
$\widehat{f}(x,y)$ has a compact support in $B(0,\Rh/2)$, then $\Thth(x,v)
= 0$. Hence we are motivated to propose the semi-discrete version of
the Wigner equation as
\begin{equation} \label{eq:discreteWigner}
  v_n \opd{f_n(x)}{x} - \frac{\pi}{\Rh}\sum_{m \in \bbZ} 
  \VwR_{n-m} f_m(x) = 0,
\end{equation}
subject to 
\begin{equation}\label{eq:discreteBC}
  f_n(0) = t^h_b(v_n), \text{ if } n \geqslant 0, \quad  
  f_n(l) = t^h_b(v_n), \text{ if } n < 0,  
\end{equation}
where 
\[
  t^h_b(v) = \FTFF{y}{v}{\IFTFF{v}{y}{f_b(v)} \zetah(y)}. 
\]
This is formulated as a boundary value problem (BVP). Since $v = 0$ is
excluded from the sampling points $v_n$, the method to prove the
well-posedness of the semi-discrete Wigner equation with inflow
boundary conditions in \cite{ALZ00} is then applicable to the BVP
\eqref{eq:discreteWigner}-\eqref{eq:discreteBC}. Here we directly
conclude that the BVP \eqref{eq:discreteWigner}-\eqref{eq:discreteBC}
admits a unique solution $f_n(x)$.

We let
\begin{equation}\label{eq:fhxL}
 f^h(x,v)=\sum_n f_n(x) \sinc\left( \Rh(v-v_n) \right),
\end{equation}
as the approximation of $t^h(x,v)$. If there is a fast enough decay of
$\widehat{f}(x,y)$ in terms of $y$, the residual term $\Thth(x,v)$ can be
arbitrary small as $h$ going to zero. With a small enough residual
$\Thth(x,v)$, not only the difference of $f^h(x,v)$ from $t^h(x,v)$ is
small, but also the difference between $t^h(x,v)$ and $f(x,v)$ may be
small. Consequently, it is expected that $f^h(x,v)$ is an appropriate
approximation of the continuous problem if there is a fast enough
decay of $\hat{f}(x,y)$ in terms of $y$. The major object in the rest
of this paper is to give the precise senses of this conclusion and its
rigid proof.

\section{Estimate of Semi-discrete Residual}\label{sec:non}

We denote the semi-discrete residual to be $e^h_n(x) = t^h(x,v_n) -
f_n(x)$. Comparing \eqref{Eqthvn} and \eqref{eq:discreteWigner}, we
have the equation for $e^h_n(x)$
\begin{equation} \label{eq:ehn}
  v_n \opd{e^h_n(x)}{x} - \frac{\pi}{\Rh} \sum_{m} \VwR(x,v_n-v_m) e^h_m(x) = \Thth(x,v_n). 
\end{equation} 
Clearly we have $e_n^h(0) = 0$ for $n \geq 0$ and $e_n^h(l) = 0$ for
$n < 0$ since the inflow BCs of $f^h_n(x)$ and
$t^h(x,v_n)$ are the same. This is again a BVP, while it is
nonhomogeneous. We directly extend the method in \cite{ALZ00} to this
nonhomogeneous BVP to give an upper bound estimate, which is used to
prove the convergence of the approximate solution.

At first, let us introduce the notations used in \cite{ALZ00}. From
the discrete equation \eqref{eq:ehn} of $e^h_n$ , we introduce vector
functions $ \mathbf{e}^h=\{e^h_{n}\}_{n\in Z}$, $ \bfThth =
\{\Thth_n\}_{n\in Z}$, then we have
\begin{equation} \label{eq:ehDiscrete}
 \mathbf{T} \frac{\dd \mathbf{e}^h(x)}{\dd x} - \mathbf{A}(x) \mathbf{e}^h
 =\bfThth
\end{equation}
with the BCs 
\begin{equation} \label{eq:ehBC}
\left\{
 \begin{aligned}
  \mathbf{e}^h_n(0)=0, & n\geqslant 0,\\
  \mathbf{e}^h_n(l)=0, & n<0 , 
 \end{aligned}
 \right. 
\end{equation}
where $\mathbf{T}$ and 
$\bA(x)$ are defined as
\begin{equation} \label{TandA}
  \mathbf{T}=\text{diag}(v_n)_{n\in\bbZ},
\quad \mathbf{A}(x)=\left(\frac{\pi}{\Rh}\VwR(x,v_n-v_m)\right)_{n,m
\in\bbZ}. 
\end{equation} 

We show below that for $0 \leqslant x \leqslant l$, $\mathbf{A}(x)$ is
a bounded linear operator on $H := l^2$ and $x \rightarrow
\mathbf{A}(x)$ is continuous in the uniform operator topology. Here
$l^2$ is the real Hilbert space with natural inner product $ \left(
  \mathbf{x}, \mathbf{y} \right) = \displaystyle \sum_{n \in \bbZ} x_n
y_n$. Notice that $\bA(x)$ is a representation based on sampling points
of $\Theta[V\chi_B(0,\Rh)]$ on $X^h = \{ e^h(x,v) \in L^2(\Rv):
\widehat{e}^h(x,y) \subset B(0,\Rh)\}$. According to Shannon sampling
theory, $\|e^h(x,v)\|_{L^2(\Rv)}^2 = \frac{\pi}{\Rh}
\|\be^h\|^2_{l^2}$. So $\be^h \in l^2 $ implies $e^h(x,v) \in X^h$.
By Lemma \ref{lemma1}, we can conclude that $\bA(x)$ can be defined as
\[
  (\bA(x)\be^h)_n  = (\Theta[V\chi_{B(0,\Rh)}]e^h)(x,v_n).  
\]
Thus we have 
\[
  \frac{\pi}{\Rh} \| \bA(x)\be^h \|_{l^2}^2  = 
  \|(\Theta[V\chi_{B(0,\Rh)}]e^h)(x,\cdot)\|_{L^2(\Rv)}^2.
\]
According to Parseval's theorem of the Fourier transform, we have
\[
  \frac{\pi}{\Rh} \| \bA(x)\be^h \|_{l^2}^2  = 
  \frac{1}{2\pi} \| D_V(x,y)\chi_{B(0,\Rh)} \hat{e}^h(x,y)\|^2_{L^2(\bbR_y)} 
  \leqslant  4\|V\|_{L^\infty}^2 \frac{\pi}{\Rh}
  \|\be^h\|^2_{l^2} . 
\]
Thus the norm of $\bA(x)$ is uniformly bounded by
\begin{equation} \label{eq:Abound}
  \|\bA(x) \| \leqslant 2 \|V\|_{L^\infty},
\end{equation}
and $\bA \in L^1((0,l); B(H))$, where $B(\cdot)$ is the space of linear
operator on a Hilbert space.

Following \cite{ALZ00}, we need to transform it into an initial value
problem (IVP) using the technique therein. At first, we
denote $\bbZ^- = \{n \in \bbZ: n < 0\}$ and $\bbZ^+ = \{n \in \bbZ:
n\geqslant 0\}$.  $H$ may be decomposed as $H = H^-\bigoplus H^+$
where $H^{\pm} = l^2(\bbZ^{\pm})$.  We denote by $Q^{\pm}$ the
restrictions of $H$ onto $H^{\pm}$, i.e., $Q^{\pm}\bff = \bff^{\pm}$
for any $\bff = (\bff^+,\bff^-)$, $\bff^{\pm}\in H^{\pm}$. Let
$P^{\pm}$ be the projections defined by $P^{+}\bff = (0, \bff^+)$,
$P^{-}\bff = (\bff^-,0)$, and the embeddings $E^{\pm}:
H^{\pm}\rightarrow H$ are defined by $E^{+}\bff^+ = (0,\bff^+)$,
$E^-\bff^-=(\bff^-,0)$. One has the relations that $P^{\pm} =
E^{\pm}Q^{\pm}$.

Since $\bA$ is clearly skew-symmetric, it is decomposed as 
\begin{equation} \label{eq:AStar}
  \bA(x) = \begin{pmatrix}
	\bA^{--} & \bA^{-+} \\
	\bA^{+-} & \bA^{++} 
  \end{pmatrix}
  = -\bA^{\ast} (x) 
\end{equation}
with $\bA^{++} = Q^+\bA E^{+} \in B(H^+)$, $\bA^{+-} = Q^+\bA E^{-}
\in B(H^-,H^+)$, $\bA^{-+} = Q^-\bA E^{+} \in B(H^+,H^-)$, $\bA^{--} =
Q^-\bA E^{-} \in B(H^-)$.  Also, one has
\begin{equation}
  D = \begin{pmatrix}
	D^{-} & 0 \\
  0 & D^+
  \end{pmatrix}
  , \quad 
  |D| = \begin{pmatrix}
	-D^{-} & 0 \\
  0 & D^+
  \end{pmatrix}
\end{equation}
where $D^{\pm} = {\rm diag}(1/v_n)_{n \in \bbZ^{\pm}}$. 
We get $|D|\geq 0$ in the Hilbert space sense, i.e., 
$\left <|D|\bff,\bff\right> \geq 0$ for every $\bff \in H$. 

Let $\bff = \sqrt{|D|} \bz $, and $\bz \in H$ implies $\bff \in
H$. Then the equation for $\bz$ is
\begin{equation} \label{eq:bz}
  \bz_x - \bB(x) \bz = \br,   \quad 0<x<l,  
\end{equation}
\begin{equation} \label{eq:bzBC}
  \bz^{+}(0) =0, \quad \bz^{-}(l) =0,  
\end{equation}
where $\br=(\br^+,\br^-) = {\sqrt{|D|}}^{-1} D \Thth$, the matrix
$\bB(x)$ is defined as $\bB(x) = \sqrt{|D|}^{-1} D \bA(x) \sqrt{|D|}$.
$\bA \in L^1( (0,l); B(H) )$ implies $\bB \in L^1( (0,l); B(H) )$
since $\sqrt{|D|} \in B(H)$. We may write $\bB(x)$ in the form
\begin{equation}
  \bB(x) = \begin{pmatrix}
	-\sqrt{-D^-} \bA^{--} (x) \sqrt{-D^-} & 
	-\sqrt{-D^-} \bA^{-+} (x) \sqrt{D^+} \\ 
	\sqrt{D^+} \bA^{+-} (x) \sqrt{-D^-}  &  
	\sqrt{D^+} \bA^{++} (x) \sqrt{D^+}   
  \end{pmatrix} . 
\end{equation}
By the norm of $\bA(x)$, it is clear that
\begin{equation} \label{eq:Bbound}
  \| \bB(x) \| \leq \dfrac{2}{\pi h } \| V \|_{L^\infty}.
\end{equation}

Lemma 3.1 in \cite{ALZ00} gave us the well-posedness for the
homogeneous BVP
\begin{equation} \label{homoeq}
  \bz_x - \bB(x) \bz = 0,   \quad 0<x<l,  
\end{equation}
\begin{equation} \label{homoIC}
  \bz(0) = \bz_0 \in H,
\end{equation}
as below:
\begin{lemma} [Lemma 3.1 in \cite{ALZ00}]
  Since $\bB \in L^1( (0,l); B(H) )$, the IVP \eqref{homoeq} -
  \eqref{homoIC} has a unique mild solution $\bz \in W^{1,1}(
  (0,l); H)$, and there exists a unique strongly continuous propagator
  $U(x,x') \in B(H)$, $\forall 0\leqslant x,x'\leqslant l$. It
  satisfies
  \begin{equation}
    \begin{array}{ll}
      \opd{U(x,0)}{x} - B(x) U(x,0) = 0,  & 
      \opd{U(0,x)}{x} + U(0,x)B(x) =0,   
    \end{array}
  \end{equation}
  almost everywhere on $(0,l)$.
\end{lemma}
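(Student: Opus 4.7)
I would begin by converting the IVP to the Volterra integral equation
\begin{equation*}
\bz(x) = \bz_0 + \int_0^x \bB(s)\bz(s)\dd s, \qquad 0 \leq x \leq l,
\end{equation*}
interpreted in the Bochner sense in $H$. Existence and uniqueness of a continuous solution $\bz \in C([0,l];H)$ then follow by Picard iteration applied to $\mathcal{K}\bz := \bz_0 + \int_0^{\cdot}\bB(s)\bz(s)\dd s$: a routine induction gives the standard Volterra estimate
\begin{equation*}
\|(\mathcal{K}^n\bz - \mathcal{K}^n\bw)(x)\|_H \leq \frac{1}{n!}\left(\int_0^l \|\bB(s)\|\dd s\right)^n \sup_{[0,l]}\|\bz - \bw\|_H,
\end{equation*}
so some iterate is a strict contraction on $C([0,l];H)$. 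Because $\bz$ is continuous and $\bB \in L^1((0,l);B(H))$, the product $\bB(\cdot)\bz(\cdot)$ lies in $L^1((0,l);H)$, which upgrades the fixed point to $\bz \in W^{1,1}((0,l);H)$ with $\bz_x = \bB(x)\bz(x)$ almost everywhere.

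For the propagator, I would rerun the same contraction argument with the initial datum $\bz(x') = \bz_0$ imposed at an arbitrary $x' \in [0,l]$, working on $[x',l]$ going forward and on $[0,x']$ going backward, and set $U(x,x')\bz_0 := \bz(x)$. Linearity in $\bz_0$ is automatic, and the Gronwall bound
\begin{equation*}
\|U(x,x')\| \leq \exp\left(\left|\int_{x'}^x \|\bB(s)\|\dd s\right|\right)
\end{equation*}
puts $U(x,x') \in B(H)$. Uniqueness of the mild solution yields the cocycle identity $U(x,x')U(x',x'') = U(x,x'')$, which together with $U(x,x) = I$ furnishes $U(x,x')^{-1} = U(x',x)$. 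Strong continuity of $(x,x') \mapsto U(x,x')\bz_0$ is immediate from the integral equation combined with absolute continuity of $s \mapsto \int_0^s \|\bB(\sigma)\|\dd \sigma$.

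The first evolution equation $\partial_x U(x,0) - \bB(x)U(x,0) = 0$ holds a.e.\ simply by applying the integral representation to arbitrary $\bz_0 \in H$. The second identity $\partial_x U(0,x) + U(0,x)\bB(x) = 0$ is the delicate point and I expect it to be the main obstacle: formally it comes from differentiating $U(0,x)U(x,0) = I$, but with only $L^1$ regularity of $\bB$ the product rule on $B(H)$-valued maps requires careful Bochner-pointwise justification. My strategy is to construct $U(0,x)$ directly as the fixed point, in $C([0,l];B(H))$, of the backward adjoint integral equation
\begin{equation*}
W(x) = I - \int_0^x W(s)\bB(s)\dd s,
\end{equation*}
solvable by the same contraction argument; uniqueness of both the forward and backward problems then forces $W(x)U(x,0) = I$, so $W(x) = U(0,x)$, and the second evolution equation follows a.e.\ by differentiating the integral representation of $W$.
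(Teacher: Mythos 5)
Your proposal is correct, but it is doing work the paper never does: the paper states this lemma purely as a citation (``Lemma 3.1 in \cite{ALZ00}'') and supplies no proof at all, later invoking Chapter 5 of \cite{Pazy1992} only for the propagator bound $\| U(x,x')\| \le \exp\bigl( \int_x^{x'} \|\bB(s)\|\dd s \bigr)$. What you have written is a self-contained, elementary reconstruction of exactly the standard evolution-system result those references rely on, and the argument is sound: the Volterra fixed point with the $\mu(x)^n/n!$ estimate (where $\mu(x)=\int_0^x\|\bB(s)\|\dd s$ is absolutely continuous, which is what makes the induction step legitimate for merely $L^1$ coefficients), the upgrade to $W^{1,1}$ via the fundamental theorem of calculus for Bochner integrals, and the cocycle identity from uniqueness are all correct. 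You also correctly identified the only genuinely delicate point --- the backward equation $\opd{U(0,x)}{x} + U(0,x)\bB(x)=0$ --- and your fix (construct $W$ directly from the backward integral equation $W(x)=I-\int_0^x W(s)\bB(s)\dd s$ in $C([0,l];B(H))$, then use that both $W$ and $U(\cdot,0)$ lie in $W^{1,1}((0,l);B(H))$ so the product rule applies a.e.\ and $W(x)U(x,0)\equiv I$) is the standard and correct resolution. The net comparison: the paper buys brevity by outsourcing the lemma to \cite{ALZ00} and \cite{Pazy1992}; your route buys self-containedness at the cost of about a page, and recovers as a byproduct the Gronwall bound on $\|U(x,x')\|$ that the paper must separately import from Pazy to prove its Lemma 2.
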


The propagator $U$ in this lemma allows us to reformulate the BVP
\eqref{eq:bz} to an IVP. Acturally, the solution of the BVP
\eqref{eq:bz} satisfies
\begin{equation} \label{eq:ghEq}
  \bz(x) = U(x,0)
  \begin{pmatrix}
    \bh^-\\ 0  
  \end{pmatrix}
  + \int_{0}^{x} U(x,s) \br(s) \dd s 
  = U(x,l) 
  \begin{pmatrix}
    0 \\ \bh^+
  \end{pmatrix}
  + \int_{l}^{x} U(x,s) \br(s) \dd s,
\end{equation}
where $\bz^-(0)=\bh^-$, $\bz^+(l) =\bh^+$ are the corresponding
outflow data. The idea is to calculate $\bh^+$ from \eqref{eq:ghEq} by
eliminating $\bh^-$. Noting that $\bz(0) = (0,\bh^-)$ and $\bz(l) =
(\bh^+,0)$, we have the equations for $\bh^-$ and $\bh^+$
\begin{equation} \label{eq:314}
  \begin{pmatrix} 0 \\ \bh^+ \end{pmatrix}
= U(l,0)\begin{pmatrix} \bh^- \\ 0 \end{pmatrix}
	+ \int_0^l U(l,s)\br(s) \dd s,
\end{equation}
\begin{equation} \label{eq:315}
  \begin{pmatrix} \bh^- \\ 0 \end{pmatrix}
= U(0,l)\begin{pmatrix} 0 \\ \bh^+ \end{pmatrix}
	+ \int_l^0 U(0,s)\br(s) \dd s.
\end{equation}
Applying $P^+$ and $P^-$ on \eqref{eq:314} and 
\eqref{eq:315} respectively yields 
\begin{equation} \label{eq:316}
  \begin{pmatrix} 0 \\ \bh^+ \end{pmatrix}
  = P^{+} U(l,0)\begin{pmatrix} \bh^- \\ 0 \end{pmatrix}
	+ P^+ \int_0^l U(l,s)\br(s) \dd s, 
\end{equation}
\begin{equation} \label{eq:317}
  \begin{pmatrix} \bh^- \\ 0 \end{pmatrix}
= P^- U(0,l)\begin{pmatrix} 0 \\ \bh^+ \end{pmatrix}
	+ P^- \int_l^0 U(0,s)\br(s) \dd s.  
\end{equation}
Eliminating $\bh^-$ in \eqref{eq:317} and \eqref{eq:316}, we obtain
the equation for $\bh^+$ as
\begin{equation} \label{eq:417}
   (I-K) \begin{pmatrix} 0 \\ \bh^+  \end{pmatrix}
	  = P^+  U(l,0) P^-  \int_{l}^{0} U(0,s) \br(s) \dd s 
  + P^+ \int_{0}^{l} U(l,s)\br(s) \dd s,
\end{equation}
where 
\[
  K = P^+U(l,0)P^-U(0,l)P^+.
\]
Here the operator $K$ is the same as $K$ defined in \cite{ALZ00} (page
7173 Eq. (3.17)) for the homogeneous case. Making use of the
skew-symmetry of $\bA(x)$, it is proved in \cite{ALZ00} that $K$ is
negative, thus $I-K$ is invertible with a bounded inverse. We are then
instantly inferred that
\[
\lVert (I-K)^{-1} \rVert \le 1.
\]
As a result, it is concluded that the nonhomogeneous BVP can be
transfomred into an IVP, as the extension of Theorem 3.3 in
\cite{ALZ00}. Precisely, we have the following lemma:
\begin{lemma}\label{lemma:non}
  The nonhomogeneous BVP \eqref{eq:bz} - \eqref{eq:bzBC} has a unique
  mild solution $\bz \in W^{1,1}( (0,l); H)$ and
  \[
	\|\bz(x)\|_{l^2} \leqslant 3 \exp \left( \dfrac{6 l \| V
	\|_{L^\infty}}{ \pi h } \right) \int_0^l \|\br(s)\|_{l^2}
  \dd s.
  \]
\end{lemma}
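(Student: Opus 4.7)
The plan is to follow the technique of Theorem~3.3 of \cite{ALZ00}, pushing the inhomogeneous source $\br$ through the reduction already set up via \eqref{eq:ghEq}--\eqref{eq:417}. The only new ingredients relative to the homogeneous case are a quantitative propagator estimate and careful bookkeeping of constants. First I would establish a uniform bound on the propagator: since $\|\bB(x)\| \leq \frac{2}{\pi h}\|V\|_{L^\infty}$ by \eqref{eq:Bbound}, a Gronwall argument applied to the equations for $U(\cdot,\cdot)$ in Lemma~1 yields
\[
  \|U(x,x')\|_{B(H)} \leq \exp\!\left(\dfrac{2 l\|V\|_{L^\infty}}{\pi h}\right), \qquad x,x' \in [0,l].
\]

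Next I would extract $\bh^+$ from \eqref{eq:417} using $\|(I-K)^{-1}\|\leq 1$, $\|P^{\pm}\|\leq 1$, and the propagator bound applied to the two right-hand side terms; chaining the two propagator norms in the $P^+ U(l,0) P^- \int_l^0 U(0,s)\br(s)\dd s$ piece produces
\[
  \|\bh^+\|_{l^2} \leq 2 \exp\!\left(\dfrac{4 l\|V\|_{L^\infty}}{\pi h}\right) \int_0^l \|\br(s)\|_{l^2}\,\dd s.
\]
I would then recover $\bh^-$ from \eqref{eq:315}, which costs only one additional factor of $\exp(2l\|V\|_{L^\infty}/(\pi h))$. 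Substituting $\bh^-$ into the first formula of \eqref{eq:ghEq} and bounding the Duhamel integral $\int_0^x U(x,s)\br(s)\,\dd s$ by the propagator norm and the triangle inequality give the desired pointwise bound on $\|\bz(x)\|_{l^2}$. The exponent $6l\|V\|_{L^\infty}/(\pi h)$ in the stated estimate arises from the composition of three propagator applications (once in $\bh^+$, once to pass from $\bh^+$ to $\bh^-$, once to transport $\bh^-$ to $\bz(x)$), and the constant $3$ from summing the three separately-bounded terms with the conservative factor $2$ coming out of \eqref{eq:417}.

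Uniqueness and the regularity $\bz \in W^{1,1}((0,l);H)$ follow automatically: invertibility of $I-K$ makes $\bh^+$, hence $\bh^-$, uniquely determined by $\br$, and $\br \in L^1((0,l);H)$ together with $\bB \in L^1((0,l);B(H))$ gives absolute continuity of the Duhamel integral via the standard mild-solution theory used in Lemma~1. I do not expect a serious obstacle; the entire argument is bookkeeping inside the framework already constructed in \cite{ALZ00}, with the one mildly delicate point being that the two propagator factors inside the $P^+U(l,0)P^- U(0,s)$ composition must be tracked separately so as not to lose the correct exponent in the final estimate.
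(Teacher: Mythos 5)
Your setup (the Gronwall bound $\|U(x,x')\|\leq \exp(2l\|V\|_{L^\infty}/(\pi h))$, the bound $\|(I-K)^{-1}\|\leq 1$, and the extraction of $\bh^+$ from \eqref{eq:417}) coincides with the paper's proof, and your estimate $\|\bh^+\|_{l^2}\leq 2\exp(4l\|V\|_{L^\infty}/(\pi h))\int_0^l\|\br(s)\|_{l^2}\dd s$ is admissible (the paper keeps the two terms separate as $\exp(2a)+\exp(4a)$ with $a=l\|V\|_{L^\infty}/(\pi h)$, which $2\exp(4a)$ majorizes). The problem is your last step. You propose to recover $\bh^-$ from \eqref{eq:315} and then use the \emph{first} representation in \eqref{eq:ghEq}. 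That detour composes one more propagator than necessary: your $\bh^+$ bound already carries $\exp(4a)$, i.e.\ two propagator applications, because the term $P^+U(l,0)P^-\int_l^0 U(0,s)\br(s)\dd s$ chains $U(l,0)$ with $U(0,s)$; passing from $\bh^+$ to $\bh^-$ costs a third factor and transporting $\bh^-$ to $\bz(x)$ a fourth, so the route as described yields $\|\bz(x)\|_{l^2}\lesssim \exp(8a)\int_0^l\|\br(s)\|_{l^2}\dd s$ rather than the stated $3\exp(6a)\int_0^l\|\br(s)\|_{l^2}\dd s$. Your closing tally of ``three propagator applications, once in $\bh^+$'' is inconsistent with your own bound on $\bh^+$, which already embodies two applications.

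The paper avoids this by using the \emph{second} representation in \eqref{eq:ghEq}, namely $\bz(x)=U(x,l)\left(0,\bh^+\right)^{T}+\int_l^x U(x,s)\br(s)\dd s$, which expresses $\bz(x)$ directly through the outflow datum $\bh^+$ without ever forming $\bh^-$; then at most three factors compose ($\exp(2a)\cdot\exp(4a)=\exp(6a)$), and the three resulting terms are each bounded by $\exp(6a)\int_0^l\|\br(s)\|_{l^2}\dd s$, which is where the constant $3$ comes from. With that one change your argument goes through; as written it proves the lemma only with $6$ replaced by $8$ in the exponent, which would also degrade the admissible range of $\alpha$ and the constant $\beta$ in the final theorem. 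The uniqueness and $W^{1,1}$ regularity part of your argument matches the paper.
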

\begin{proof}
  Given by \cite{ALZ00}, the self-adjointness of the bounded operator
  $K$ imply that $I-K$ is invertible with a bounded inverse, which
  shows the unique solvability of the BVP \eqref{eq:bz} -
  \eqref{eq:bzBC}. In the following, we are going to estimate $\lVert
  \bz (x) \rVert_{l^2}$.

  In the chapter 5 of \cite{Pazy1992}, it shows for every $ 0\leq x
  \leq x' \leq l$, $U(x, x')$ is a bounded linear operator and
  \[
    \| U(x, x')\| \le \exp \left( \int_x^{x'} \|\bB(s)\| \dd s \right),
  \]
  thus due to \eqref{eq:Bbound},
  \begin{equation} \label{eq:esti-U}
	\|U(x,x')\| \leqslant \exp \left( \dfrac{ 2 |x' - x| \| V
	\|_{L^\infty}}{\pi h } \right) \leqslant \exp 
	\left( \dfrac{ 2 l \| V \|_{L^\infty}}{  \pi h} \right).
  \end{equation}
  By \eqref{eq:ghEq}, we have
  \begin{equation} \label{eq:esti-bz}
    \begin{aligned}
	  \lVert \bz(x) \rVert_{l^2} & \leqslant \lVert U(x,l) \rVert ~ \lVert
      \bh^+ \rVert_{l^2} + \int_0^l \lVert U(x,s) \rVert ~ \lVert
      \br(s) \rVert_{l^2} \dd s \\
      & \leqslant \lVert U(x,l) \rVert ~ \lVert \bh^+ \rVert_{l^2} +
	  \exp \left( \dfrac{ 2 l \| V \|_{L^\infty}}{ \pi h}
	  \right) \int_0^l \|\br(s)\|_{l^2} \dd s \\
	  & \leqslant \exp \left( \dfrac{2 l \| V \|_{L^\infty}}{
  \pi h } \right) \left( \lVert \bh^+ \rVert_{l^2} + \int_0^l
          \|\br(s)\|_{l^2} \dd s \right).
    \end{aligned}
  \end{equation}
  Since $\|(I-K)^{-1}\|\le 1$ , $\|P^+ \|\le 1 $, $\|P^- \|\le 1$ and
  by \eqref{eq:esti-U}, we estimate $\bh^+$ using \eqref{eq:417} to
  have
  \begin{equation} \label{eq:hpestimate}
    \lVert \bh^+ \rVert_{l^2} \leqslant \left( \exp \left( \dfrac{
		  2 l 
	  \| V \|_{L^\infty}}{ \pi h}  \right) + \exp \left( 
	\dfrac{4 l \| V \|_{L^\infty}}{ \pi h} \right) \right) 
    \int_0^l \lVert \br(s) \rVert_{l^2} \dd s. 
  \end{equation}
  Substituting \eqref{eq:hpestimate} into \eqref{eq:esti-bz} yields
  the estimate for $\| \bz(x) \|_{l^2}$, i.e., 
  \[
  \|\bz(x)\|_{l^2} \leqslant 3 \exp \left( \dfrac{6 l \| V
      \|_{L^\infty}}{ \pi h} \right) \int_0^l \|\br(s)\|_{l^2}
  \dd s.
  \]
 This ends the proof.
\end{proof}

Recalling the relation that $\bff=\sqrt{|D|} \bz$, we immediately
deduce the estimate for the original BVP \eqref{eq:ehDiscrete} -
\eqref{eq:ehBC} from Lemma \ref{lemma:non}. We remark that $\bz\in H$
if and only if $\bff \in \tilde{H}$ where the space
$\tilde{H}=l^2(\bbZ; |v_n|)$ is a weighted $l^2$-space endowed with
the inner product
\[
  (x,y)_{\tilde{H}}:=\sum_{j\in J} |v_j|x_j y_j.
\]
\begin{corollary} \label{lemma:5} The BVP \eqref{eq:ehDiscrete} -
  \eqref{eq:ehBC} has a unique mild solution $\be^h \in W^{1,1}(
  (0,l); \tilde{H})$, $\mathbf{T} \opd{\be^h}{x} \in L^1((0,l); H)$ and
  \begin{equation}
    \|\be^h \|_{\tilde{H}}
    \leqslant
	\frac{3}{\sqrt{\pi h}} \exp \left( \dfrac{6 l 
	  \|V\|_{L^\infty}}{ \pi h} 
	\right) \int_0^l \| \bfThth(s)\|_{l^2} \dd s ,
  \end{equation}
\end{corollary}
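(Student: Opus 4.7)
The plan is to push the estimate of Lemma \ref{lemma:non}, which lives in the plain $l^2$ space $H$, back through the change of variables $\be^h = \sqrt{|D|}\bz$ introduced before the lemma, landing in the weighted space $\tilde{H}$. The central observation is that the multiplier $\sqrt{|D|}$, having diagonal entries $1/\sqrt{|v_n|}$, acts as an isometric bijection from $H$ onto $\tilde{H}$: a direct computation gives
\[
  \|\be^h\|_{\tilde H}^2 \;=\; \sum_{n\in\bbZ} |v_n|\,(e^h_n)^2 \;=\; \sum_{n\in\bbZ} z_n^2 \;=\; \|\bz\|_{l^2}^2,
\]
so unique solvability of the $\bz$--problem in $W^{1,1}((0,l);H)$ transfers verbatim to unique solvability of the $\be^h$--problem in $W^{1,1}((0,l);\tilde H)$, with equality of norms.

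Next I would bound the transformed source $\br = \sqrt{|D|}^{-1} D\,\bfThth$. Its $n$-th component is $\operatorname{sign}(v_n)\,\Thth_n/\sqrt{|v_n|}$, so $\|\br\|_{l^2}^2 = \sum_n (\Thth_n)^2/|v_n|$. Because the shifted sampling points satisfy $|v_n| = (2n+1)\pi h$ in absolute value, with minimum $|v_0| = |v_{-1}| = \pi h$, this yields the clean pointwise-in-$x$ estimate
\[
  \|\br(s)\|_{l^2} \;\leq\; \frac{1}{\sqrt{\pi h}}\,\|\bfThth(s)\|_{l^2}.
\]
This is precisely where the factor $1/\sqrt{\pi h}$ in the target inequality is born, and it is the only place where the shift of the sampling grid (away from $v=0$) is essential for the present corollary.

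With these two ingredients in hand, the estimate follows by applying Lemma \ref{lemma:non} to $\bz$, using the isometry $\|\be^h\|_{\tilde H} = \|\bz\|_{l^2}$ on the left and the above bound on $\|\br\|_{l^2}$ inside the time integral on the right. For the remaining assertion that $\mathbf{T}\,\opd{\be^h}{x} \in L^1((0,l);H)$, I would simply read off \eqref{eq:ehDiscrete}: rewriting $\mathbf{T}\opd{\be^h}{x} = \bA(x)\be^h + \bfThth$, the bound \eqref{eq:Abound} on $\|\bA(x)\|$ plus the continuous embedding $\tilde H \hookrightarrow H$ (which again uses $|v_n|\geq \pi h$) shows that $\bA\be^h \in L^1((0,l);H)$, and $\bfThth$ belongs to this space by hypothesis.

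The proof is therefore essentially an exercise in translating norms through the weight $\sqrt{|D|}$; the main place that requires care is the bookkeeping of the two distinct roles that the lower bound $|v_n|\geq \pi h$ plays, once in bounding $\|\br\|_{l^2}$ by $\|\bfThth\|_{l^2}$ (producing the prefactor $1/\sqrt{\pi h}$) and once in identifying $\tilde H$ as a subspace of $H$ for the regularity of $\mathbf{T}\opd{\be^h}{x}$. No new analytic tool beyond Lemma \ref{lemma:non} is needed.
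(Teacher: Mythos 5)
Your proposal is correct and follows essentially the same route as the paper: the paper's own proof is the one-line observation that $\|\br(x)\|_{l^2} \leqslant \frac{1}{\sqrt{\pi h}}\|\br(x)\|_{\tilde{H}} = \frac{1}{\sqrt{\pi h}}\|\bfThth(x)\|_{l^2}$ (your lower bound $|v_n|\geqslant \pi h$ on the shifted grid), combined with the isometry $\|\be^h\|_{\tilde H}=\|\bz\|_{l^2}$ and Lemma~\ref{lemma:non}. Your additional remarks on the regularity statement $\mathbf{T}\opd{\be^h}{x}\in L^1((0,l);H)$ are a correct filling-in of a step the paper leaves implicit.
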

\begin{proof}
  Noticing that $\|\br(x)\|_{l^2} \leqslant \frac{1}{\sqrt{\pi h}}
  \|\br(x)\|_{\tilde{H}} =\frac{1}{\sqrt{\pi h}} \|\bfThth(x)\|_{l^2}$,
  the result is inferred by Lemma \ref{lemma:non}.
\end{proof}

\section{Convergence}
By Corollary \ref{lemma:5} and the triangle inequality 
\begin{equation} \label{eq:inequality}
    \|f^h(x,v) - f(x,v)\|_{L^2(\Rv)} \leqslant 
    \|f^h(x,v) - t^h(x,v)\|_{L^2(\Rv)} 
	+ \|f(x,v) - t^h(x,v)\|_{L^2(\Rv)},
\end{equation}
the term $\|f(x,v) - t^h(x,v)\|_{L^2(\Rv)}$ has to be estimated to
have the final result on $\| f^h(x,v) -f(x,v) \|_{L^2(\Rv)}$.
Obviously, $\|f(x,v) - t^h(x,v)\|_{L^2(\Rv)}$ is not going to zero as
$h \rightarrow 0$ without any assumption on $f(x,v)$. Let us assume
that $f(x,v)$ satisfies
\[
  f(x,v) \in C\left( [0,l]; L^2\left( \Rv \right) \right)
  \cap C^{1}\left( (0,l); L^2\left(\Rv \right) \right).   
\]
Though this is not a rigour constraint on $f(x,v)$, it is enough to
provide us the corresponding convergence. Since $t^h(x,v)$ is
approximating $f(x,v)$ using the Whittaker-Shannon interpolation
formula, which is a spectral expansion, a successful approximation to
$f(x,v)$ has to require a certain decay in Fourier space. With the
enhanced assumption that the Fourier transformation of $f(x,v)$ is
decaying exponentially, a spectral convergence may be
achieved. Precisely, from the fact that the compactly supported smooth
functions $C_c^{\infty}(\bbR)$ are dense in $L^2(\bbR)$ and the fact
that Fourier transform is a unitary transform on $L^2(\bbR)$, the
estimate of $\| t^h-f \|_{\LLSpaceRv}$ is given in the following
lemma.
\begin{lemma} \label{lemmaAppr1} Let $f(x,v) \in \LLSpaceRv$
  and $t^h(x,v) = \IFTFF{y}{v}{\widehat{f}(x,y)\zetah(y)}$ where
  $\zetah$ is defined in \eqref{zetaR}, then
  \[
  \lim_{h \rightarrow 0^+} \| f - t^h \|_{\LLSpaceRv} = 0.
  \]
  Furthermore, if there exists a constant $\alpha >0$ such that
  $\widehat{f}(x,y) \exp (\alpha |y|) \in \LLSpaceRy $, then $f^h$
  converges to $f$ with an exponential rate
  \[
  \| f - t^h \|_{\LLSpaceRv}  \leqslant  C \exp \left(-\frac{\alpha }{4h} \right), 
  \]
  where $C = \frac{1}{\sqrt{2\pi}}\| \widehat{f}(x,y) \exp(\alpha|y|)\|_{\LLSpaceRy}$
  does not dependent on $h$.  
\end{lemma}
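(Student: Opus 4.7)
The plan is to transfer the $L^2_v$ estimate to the Fourier side via Parseval, and then exploit that the cutoff $\zetah$ equals $1$ on $\{|y|\le \Rh/2 = 1/(4h)\}$, so that $\widehat{f}(x,y)(1-\zetah(y))$ is supported only at high frequencies. With the paper's unnormalized Fourier convention, Parseval reads $\|u\|_{L^2(\Rv)} = \frac{1}{\sqrt{2\pi}}\|\widehat{u}\|_{L^2(\bbR_y)}$, so
\begin{equation*}
\|f(x,\cdot)-t^h(x,\cdot)\|_{L^2(\Rv)} = \frac{1}{\sqrt{2\pi}}\,\bigl\|\widehat{f}(x,\cdot)\,(1-\zetah(\cdot))\bigr\|_{L^2(\bbR_y)},
\end{equation*}
and since $0\le 1-\zetah \le \chi_{\{|y|>1/(4h)\}}$ pointwise, the right-hand side is controlled by the $L^2_y$-tail of $\widehat{f}(x,\cdot)$ beyond $|y|=1/(4h)$.

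For the qualitative statement, I would argue by dominated convergence in two stages. For each fixed $x$, $\widehat{f}(x,\cdot)\in L^2(\bbR_y)$ implies that $\int_{|y|>1/(4h)}|\widehat{f}(x,y)|^2\dd y\to 0$ as $h\to 0^+$; this is just the density of compactly supported functions in $L^2$ rephrased, via the fact that finite-$L^2$-mass sets exhaust the real line. To pass to the $L^1_x$ norm, I would then use the uniform majorant $\|\widehat{f}(x,\cdot)(1-\zetah)\|_{L^2(\bbR_y)}\le \sqrt{2\pi}\|f(x,\cdot)\|_{L^2(\Rv)}$, which lies in $L^1(0,l)$ by hypothesis, and apply dominated convergence a second time to conclude $\|f-t^h\|_{\LLSpaceRv}\to 0$.

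For the exponential rate, the stronger assumption $\widehat{f}(x,y)\exp(\alpha|y|)\in\LLSpaceRy$ lets me absorb the tail via the elementary pointwise inequality on $\{|y|>1/(4h)\}$,
\begin{equation*}
|\widehat{f}(x,y)(1-\zetah(y))|^2 \;\le\; e^{-\alpha/(2h)}\,|\widehat{f}(x,y)\,e^{\alpha|y|}|^2.
\end{equation*}
Taking successively the $L^2_y$-norm and the $L^1_x$-norm, and carrying along the $1/\sqrt{2\pi}$ from Parseval, reproduces exactly the constant $C=\frac{1}{\sqrt{2\pi}}\|\widehat{f}(x,y)e^{\alpha|y|}\|_{\LLSpaceRy}$ together with the decay factor $e^{-\alpha/(4h)}$. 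The argument is essentially mechanical; the only bookkeeping obstacle is matching the Fourier normalization so that the stated constant comes out with the correct $1/\sqrt{2\pi}$ prefactor and the exponent $\alpha/(4h)$ (rather than $\alpha/(2h)$) arises from taking the square root at the end.
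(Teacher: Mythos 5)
Your proof is correct and follows essentially the same route as the paper: Parseval's identity, the observation that $1-\zetah$ is supported in $\{|y|\geqslant \Rh/2 = 1/(4h)\}$, and the exponential weight to extract the rate $e^{-\alpha/(4h)}$. Your version is in fact slightly more careful than the paper's in two places: you spell out the dominated-convergence argument for the qualitative limit (the paper merely asserts it is clear), and you correctly bound the tail over all of $\{|y|>\Rh/2\}$ whereas the paper's displayed intermediate bound restricts the integral to the annulus $[\Rh/2,3\Rh/4]$ and thereby omits the region $|y|>3\Rh/4$ where $1-\zetah\equiv 1$ (a harmless slip that does not affect the final estimate).
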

\begin{proof}
  By the Parseval theorem of the Fourier transform, we have  
  \begin{equation} \label{es1}
    \begin{split}
	\|f - t^h\|_{\LLSpaceRv}  &= 
	\frac{1}{\sqrt{2\pi}} \| \widehat{f} - \widehat{t^h} \|_{\LLSpaceRy}  \\
	&= \frac{1}{\sqrt{2\pi}}
	\int_0^{l} \| \widehat{f}(x,\cdot) - \widehat{f}(x,\cdot)\zetah(\cdot) \|
    _{L^2(\bbR_y)} \dd x. 
    \end{split}
  \end{equation}
  According to the definition of $\zetah(y)$ in \eqref{zetaR}, we have 
  \begin{equation}
	\int_0^{l} \| \widehat{f}(x,\cdot) - \widehat{f}(x,\cdot)\zetah(\cdot) \|
    _{L^2(\bbR_y)} \dd x 
	\leqslant 
	\int_0^l \left(  \int_{|y| \in 
	[\frac{\Rh}{2},\frac{3\Rh}{4}] }
  | \widehat{f}(x,y) |^2 \dd y \right)^{1/2} \dd x.
  \end{equation}
  It is clear the right hand side is going to zero as $h \rightarrow
  0$. If $\widehat{f}(x,y) \exp (\alpha |y|) \in \LLSpaceRy $, obviously
  we have
  \[
  \int_0^{l} \| \widehat{f}(x,\cdot) - \widehat{f}(x,\cdot)\zetah(\cdot) \|
  _{L^2(\bbR_y)} \dd x 
  \leqslant 
  \exp \left(-\frac{\alpha \Rh}{2} \right)
  \int_0^l \left(  \int_{\bbR_y}
    \left| \widehat{f}(x,y) \exp (\alpha |y|) \right|^2 \dd y \right)^{1/2} \dd x.
  \]
  Noticing that $\Rh = \dfrac{1}{2h}$, we finish the proof.  
\end{proof}
In order to the estimate of $\|e^h\|_{\LLSpaceRv}$ using Corollary 1,
we give the estimate $ \int_{0}^{l} \|\bfThth(x)\|_{l^2} \dd x = 
\sqrt{2\pi h}  \int_{0}^{l}\|\Thth(x,\cdot)\|_{L^2(\Rv)}\dd x 
$ in the following lemma.  
\begin{lemma}\label{lemmaAppr2}
  If there exists a constant $\alpha>0$ such that
  $\pd{\widehat{f}(x,y)}{x} \exp (\alpha |y| ) \in \LLSpaceRy $, then
  \begin{equation} \label{eq:RESbound}
    \|\Thth\|_{\LLSpaceRv} \leqslant C 
    h \exp \left(-\frac{\alpha}{4h} \right),
  \end{equation}
  where $C =
  \dfrac{C_{\zeta}}{\sqrt{2\pi}} \left\|
  \pd{\widehat{f}(x,y)}{x} \exp(\alpha|y|) \right\|_{\LLSpaceRy} $.
\end{lemma}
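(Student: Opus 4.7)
My plan is to mimic the structure of the proof of Lemma \ref{lemmaAppr1}: pass to the Fourier side via Parseval, localize the integrand using the support property of $\zetah'$, and then exchange a factor of $1$ for an exponential factor using the decay hypothesis. The starting point is the identity established just before \eqref{Eqthv}, namely $\widehat{\Thth}(x,y) = \pd{\widehat{f}(x,y)}{x}\zetah'(y)$, which converts the estimate of $\Thth$ in $L^2(\Rv)$ into an estimate of $\pd{\widehat{f}}{x}\zetah'$ in $L^2(\bbR_y)$.

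First I would apply Parseval's theorem to obtain
\[
  \|\Thth\|_{\LLSpaceRv} = \frac{1}{\sqrt{2\pi}}\int_0^l \bigl\| \pd{\widehat{f}(x,\cdot)}{x}\zetah'(\cdot)\bigr\|_{L^2(\bbR_y)}\dd x.
\]
Next, I would use the two properties of $\zetah'$ recorded in \eqref{zetahProp1} and \eqref{zetahProp2}: the support statement restricts the $y$-integration to the annulus $|y|\in[\Rh/2,3\Rh/4]$, and the pointwise bound $|\zetah'(y)|\le C_\zeta h$ pulls out a clean factor of $C_\zeta h$. This yields
\[
  \bigl\| \pd{\widehat{f}(x,\cdot)}{x}\zetah'(\cdot)\bigr\|_{L^2(\bbR_y)}^2 \le C_\zeta^2 h^2 \int_{|y|\ge \Rh/2} \bigl|\pd{\widehat{f}(x,y)}{x}\bigr|^2 \dd y.
\]

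The exponential decay then enters through the elementary inequality $1\le \exp(-\alpha \Rh)\exp(2\alpha|y|)$ valid on $|y|\ge \Rh/2$, which gives
\[
  \int_{|y|\ge \Rh/2}\bigl|\pd{\widehat{f}(x,y)}{x}\bigr|^2\dd y \le \exp(-\alpha \Rh)\int_{\bbR_y}\bigl|\pd{\widehat{f}(x,y)}{x}\exp(\alpha|y|)\bigr|^2\dd y.
\]
Taking square roots, integrating over $x\in(0,l)$, and substituting $\Rh=1/(2h)$ to identify $\exp(-\alpha\Rh/2)=\exp(-\alpha/(4h))$, I recover exactly the stated bound with the constant $C=\frac{C_\zeta}{\sqrt{2\pi}}\bigl\|\pd{\widehat{f}}{x}\exp(\alpha|y|)\bigr\|_{\LLSpaceRy}$. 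There is no real obstacle here; the only point that requires a small amount of care is keeping the factor of $h$ from $\zetah'$ separate from the exponential factor coming from the annular support so that the final bound is $h\exp(-\alpha/(4h))$ rather than merely $\exp(-\alpha/(4h))$—this extra $h$ is precisely what will be needed downstream when combining with the $1/\sqrt{h}$ blow-up in Corollary \ref{lemma:5}.
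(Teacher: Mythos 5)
Your proof is correct and follows essentially the same route as the paper's: Parseval's theorem, the support and pointwise bounds \eqref{zetahProp1}--\eqref{zetahProp2} on $\zetah'$, and insertion of the exponential weight on the annulus $|y|\geqslant \Rh/2$. If anything, your bookkeeping of the exponent is slightly cleaner than the paper's intermediate display, which writes $\exp(-\alpha\Rh/2)$ where the squared inequality should carry $\exp(-\alpha\Rh)$; the final bound $\exp(-\alpha/(4h))$ is the same in both arguments.
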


\begin{proof}
By the Parseval theorem of the Fourier transform, we have 
  \[
  \lVert \Thth \rVert_{L^2(\Rv)} ^2 =
  \frac{1}{2\pi}\lVert \widehat{\Thth}  \rVert_{L^2(\bbR_y)} ^2  
  =\frac{1}{2\pi} \left\| \pd{}{x}\widehat{f}(x,y) \zetah'(y) 
  \right\|_{L^2(\bbR_y)} ^2.  
\]
Using the properties \eqref{zetahProp1} and \eqref{zetahProp2} of the
cutoff function, we obtain
  \[
  \left\| \pd{}{x}\widehat{f}(x,y) \zetah'(y) 
  \right\|_{L^2(\bbR_y)} ^2  \leqslant 
  C_{\zeta}^2 h^2 \exp \left(-\frac{\alpha \Rh}{2}\right) \int
  _{\bbR_y }  \left \vert
  \pd{}{x}\widehat f(x,y) \exp(\alpha|y|) \right \vert ^2   \dd y .
  \]
Thus, we have 
\begin{equation}
    \|\Thth\|_{\LLSpaceRv} \leqslant 
	\frac{C_{\zeta} h}{\sqrt{2\pi}} 
	\exp \left(-\frac{\alpha}{4h} \right) \left\|
	\pd{\widehat{f}(x,y)}{x} \right\|_{\LLSpaceRy}.
\end{equation} 
  This gives us \eqref{eq:RESbound}.
\end{proof}

We are now ready to give the major result:
\begin{theorem}
  Let $V(x) \in L^{\infty}(\bbR)$. 
  If the continuous BVP \eqref{eq:cont}-\eqref{eq:contBC} has a unique
  solution $f(x,v) \in C^0( [0,l];L^2(\Rv)) \cap
	C^1( (0,l);L^2(\Rv)) $, and there exists a
	constant $\alpha > \dfrac{24 l}{\pi}\|V\|_{L^{\infty}}$ 
  such that $\widehat{f}(x,y) \exp (\alpha |y|)
  \in W^{1,1}( (0,l); L^2(\bbR_y))$, then 
\[
  \|f^h - f\|_{\LLSpaceRv} 
  \leqslant  C \exp \left(-\frac{\beta}{h} \right),
\]
where $C  = \max\left( \dfrac{1}{\sqrt{2\pi}}, \dfrac{ 3 C_{\zeta} }{
\sqrt{2}\pi^{3/2} } \right)
\left\|\widehat{f}(x,y) \exp (\alpha |y|) \right\|_{
W^{1,1}\left( (0,l); L^2(\bbR_y)   \right)} $
and $\beta =  \dfrac{\alpha}{4} -\dfrac{6 l}
{\pi}\|V\|_{L^{\infty}}$. 
\end{theorem}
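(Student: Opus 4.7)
The proof starts from the triangle inequality \eqref{eq:inequality}, splitting $\|f^h - f\|_{\LLSpaceRv}$ into the interpolation error $\|t^h - f\|_{\LLSpaceRv}$ and the discretization error $\|f^h - t^h\|_{\LLSpaceRv}$. The plan is to control the first by Lemma \ref{lemmaAppr1} and the second by combining Corollary \ref{lemma:5} with Lemma \ref{lemmaAppr2}. The target exponent should emerge from matching the tail decay rate $\alpha/(4h)$ against the propagator growth rate $6l\|V\|_{L^\infty}/(\pi h)$ from Corollary \ref{lemma:5}, so that $\beta = \alpha/4 - 6l\|V\|_{L^\infty}/\pi$ appears naturally, and the hypothesis $\alpha > 24 l \|V\|_{L^\infty}/\pi$ is precisely what ensures $\beta > 0$.

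For the interpolation error, I would invoke Lemma \ref{lemmaAppr1}, using the embedding of $W^{1,1}((0,l); L^2(\bbR_y))$ into $\LLSpaceRy$, to obtain
\[
\|f - t^h\|_{\LLSpaceRv} \leqslant \frac{1}{\sqrt{2\pi}}\left\| \widehat{f}(x,y) \exp(\alpha|y|) \right\|_{W^{1,1}((0,l); L^2(\bbR_y))} \exp\left(-\frac{\alpha}{4h}\right).
\]
Since $\beta \leqslant \alpha/4$, this contribution is at most $(2\pi)^{-1/2}\,\| \widehat{f}\exp(\alpha|\cdot|)\|_{W^{1,1}} \exp(-\beta/h)$, which already has the right structure.

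For the discretization error, the key observation is that $t^h$ and $f^h$ are both Whittaker--Shannon expansions on the same shifted grid $\{v_n\}$ (the former by \eqref{thxvSinc}, the latter by \eqref{eq:fhxL}), so $f^h - t^h$ is the sinc series with coefficients $-e^h_n(x)$. The Parseval identity for shifted samples gives, pointwise in $x$, $\|f^h(x,\cdot) - t^h(x,\cdot)\|_{L^2(\Rv)}^2 = (\pi/\Rh)\|\be^h(x)\|_{l^2}^2 = 2\pi h\,\|\be^h(x)\|_{l^2}^2$. Because $|v_n|\geqslant \pi h$ on the shifted grid, $\|\be^h\|_{l^2} \leqslant \|\be^h\|_{\tilde H}/\sqrt{\pi h}$, yielding $\|f^h(x,\cdot) - t^h(x,\cdot)\|_{L^2(\Rv)} \leqslant \sqrt{2}\,\|\be^h(x)\|_{\tilde H}$. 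I would then insert the estimate from Corollary \ref{lemma:5}, rewriting $\int_0^l \|\bfThth(s)\|_{l^2}\dd s$ in terms of $\int_0^l \|\Thth(s,\cdot)\|_{L^2(\Rv)}\dd s$ via the same Parseval/sampling relation applied to $\Thth$ (whose Fourier transform is supported in $B(0,3\Rh/4)$), and finally apply Lemma \ref{lemmaAppr2} to bound the $L^2(\Rv)$ integral by $C_\zeta(2\pi)^{-1/2} h \exp(-\alpha/(4h))\|\partial_x \widehat{f}\exp(\alpha|y|)\|_{\LLSpaceRy}$.

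The only nontrivial step is the bookkeeping: verifying that the various powers of $\sqrt{h}$ and $h$ from three separate Parseval-type identities (for $f^h-t^h$, for $\be^h$, and for $\Thth$), combined with the $1/\sqrt{\pi h}$ inside Corollary \ref{lemma:5} and the explicit factor $h$ in Lemma \ref{lemmaAppr2}, cancel to leave an $h$-independent prefactor $3 C_\zeta/(\sqrt{2}\pi^{3/2})$ multiplying $\|\partial_x \widehat{f}\exp(\alpha|y|)\|_{\LLSpaceRy}\,\exp(-\beta/h)$. Integrating in $x$ over $(0,l)$, summing the two contributions, bounding both the function and derivative norms above by $\|\widehat{f}\exp(\alpha|y|)\|_{W^{1,1}((0,l);L^2(\bbR_y))}$, and pulling out the maximum of the two constants then yields the stated estimate with $C$ and $\beta$ as claimed.
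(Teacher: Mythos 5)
Your proposal is correct and follows essentially the same route as the paper: the triangle inequality \eqref{eq:inequality}, Lemma \ref{lemmaAppr1} for $\|f-t^h\|$, and the chain of sampling/Parseval identities combining Corollary \ref{lemma:5} with Lemma \ref{lemmaAppr2} for $\|f^h-t^h\|$, with the powers of $h$ cancelling exactly as you describe to give the constant $3C_{\zeta}/(\sqrt{2}\pi^{3/2})$. The paper's proof is precisely this bookkeeping, so no further comparison is needed.
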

\begin{proof}
  By Lemma \ref{lemmaAppr1}, we have
  \begin{equation} \label{the1}
    \| f - t^h \| _{\LLSpaceRv} \leqslant C_1 \exp
	\left(-\frac{\alpha}{4h }\right),  
  \end{equation}
  where $C = \frac{1}{\sqrt{2\pi}} \|
  \widehat{f}(x,y) \exp(\alpha|y|) \|_{\LLSpaceRy}$. 

Using the facts that
 \[
   \|f^h(x,v)-t^h(x)\|_{L^2(\Rv)} = \|e^h(x,v)\|_{L^2(\Rv)} =
  \sqrt{2\pi h}  \|\be^h\|_{l^2},
\]
\[
   \|\Thth(x,\cdot)\|_{L^2(\Rv)} =
   \sqrt{2\pi h} \|\bfThth(x)\|_{l^2} , 
\] 
\[
  \| \be^h\|_{l^2} \leq \frac{1}{\sqrt{\pi h}} \| \be^h\|_{\tilde{H}}, 
\]
and by Lemma \ref{lemmaAppr2} and Corollary \ref{lemma:5}, we have
  \begin{equation} \label{the2}
    \| f^h(x,v)-t^h(x,v)\|_{\LLSpaceRv} \leqslant 
	C_2
	\exp \left( - \left( \frac{\alpha}{4} -
	  \frac{6l}{\pi}\|V\|_{L^{\infty}} \right)\frac{1}{h} 
	\right),  
  \end{equation}
  where $C_2= \dfrac{3C_{\zeta}}{\sqrt{2}\pi^{3/2}} \left\|
    \pd{\widehat{f}(x,y)}{x} \exp (\alpha |y|) \right\|_{\LLSpaceRy}
  $.  Then we finish the proof by \eqref{the1}, \eqref{the2} and the
  triangle inequality \eqref{eq:inequality}.
\end{proof}

\section*{Acknowledgements}
This research was supported in part by the National Basic Research
Program of China (2011CB309704) and NSFC (91230107, 11325102,
91330205).

\begin{appendix}
\section{Shannon Sampling Theory}
We consider $f(v)$ to be a smooth function of $v$ in the sense its 
Fourier transform has a compact support, i.e., 
${\rm supp}(\hat{f}) \subset B(0,\Rh)$. According to the Shannon 
sampling theory (a lot of references, e.g., \cite{Marks1991}), the 
function $f(v)$ can be represented by  
\begin{equation} \label{eq:Whittaker}
  f(v) = \sum_{n=-\infty}^{\infty} f\left( v_n \right)
  \sinc \left( \Rh\left( v-v_n \right) \right),
\end{equation}
where $h = \frac{1}{2\Rh}$  $v_n=(n+1/2)\frac{\pi}{\Rh}=
2\pi  (n+1/2) h $ is the $n$-th sampling point. 
The sampling frequency is higher than twice of the highest
frequency of $f$, since $\supp{\hat{f}}\subset B(0,\Rh)$ implies 
the largest $|y|$ satisfying $\hat{f}(y)\neq 0$ is smaller than $\Rh$.  
Thus $f(v)$ can be completely reconstructed by its values at the
sampling points through \eqref{eq:Whittaker}.
The $\sinc$ function is defined by
\begin{equation*}
  \sinc x = \frac{\sin x}{x}. 
\end{equation*}
\eqref{eq:Whittaker} is then called the Whittaker-Shannon interpolation
formula. 

From the convolution theorem of the Fourier transform, it is easy to
know that if $f$ has a compact supported Fourier transform,
i.e. $\supp(\hat{f}) \subset B(0,\Rh)$, then their convolution 
$f\ast g(v) = \int_{\bbR} f(v-v') g(v') \dd v' $
has a compact supported Fourier transform 
\[
\FTFF{v}{y}{f\ast g (v)} = \hat{f}\hat{g}   
\]
with 
\[
\supp\left( \FTFF{v}{y}{f\ast g(v)} \right) \subset B(0,\Rh). 
\]
 Thus  $(f\ast g)(v)$ can be 
represented by the Whittaker-Shannon interpolation formula. 
Explicitly, we have the following lemma to represent $(f\ast g)(v)$. 
\begin{lemma} \label{lemma1}
Let $f(v)$ be a function with a compactly supported
Fourier transform  satisfying $\supp (\hat{f}) \subset B(0,\Rh) $.   
Let $g(v) \in L^2(\bbR)$. Let $L>2\Rh$, $h=\frac{1}{2\Rh}$,   
$v_n =(n+1/2)2\pi h$ and $\tilde{v}_n= n 2\pi h$.   
Then the convolution of $f$ and $g$ can be 
expressed with the Whittaker-Shannon interpolation formula
\eqref{eq:Whittaker}, 
\begin{equation}\label{convolution2}
f\ast g(v) = \frac{2\pi}{L}\sum_{n\in\bbZ}
\sum_{m\in\bbZ} g_{n-m}f_m 
\sinc \left(\frac{L}{2}(v-v_n)\right),  
\end{equation} 
where 
\begin{equation}
f_n=f(v_n), \quad  g_n = \left(\mathcal{F}^{-1}_{y\rightarrow
  v}\left(\hat{g}(y)\chi_{B(0,\Rh)}\right)\right)(\tilde{v}_n).
\end{equation}
\end{lemma}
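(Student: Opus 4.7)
My plan is to combine the Whittaker-Shannon interpolation formula (\ref{eq:Whittaker}) applied to $f\ast g$ with a Fourier-space expansion of $f$, in order to reduce the samples $(f\ast g)(v_n)$ to a discrete convolution of the sample values $f_m$ and $g_{n-m}$.

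First I would observe that the Fourier convolution theorem gives $\widehat{f\ast g}=\hat f\,\hat g$, so $\supp(\widehat{f\ast g}) \subset \supp(\hat f) \subset B(0,\Rh) \subset B(0,L/2)$. The hypothesis of (\ref{eq:Whittaker}) is then met for $f\ast g$ at the shifted sampling points $v_n=(n+1/2)\,2\pi h$, giving
\[
(f\ast g)(v) = \sum_{n\in\bbZ}(f\ast g)(v_n)\,\sinc\!\left(\tfrac{L}{2}(v-v_n)\right).
\]
All that remains is to identify each sample $(f\ast g)(v_n)$ with $\frac{2\pi}{L}\sum_{m}f_m g_{n-m}$.

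Next I would Fourier-transform the Whittaker-Shannon series for $f$ itself. Since the Fourier transform of $\sinc(\Rh(v-v_m))$ equals $\frac{\pi}{\Rh}\chi_{B(0,\Rh)}(y)\,\de^{-\ri v_m y}$, the series (\ref{eq:Whittaker}) for $f$ yields
\[
\hat f(y) = \frac{\pi}{\Rh}\chi_{B(0,\Rh)}(y)\sum_{m\in\bbZ} f_m\,\de^{-\ri v_m y}.
\]
Multiplying by $\hat g(y)$ and taking the inverse Fourier transform evaluated at $v_n$, one encounters the integral $\frac{1}{2\pi}\int_{\bbR}\chi_{B(0,\Rh)}(y)\hat g(y)\,\de^{\ri(v_n-v_m)y}\,\dd y$, which is precisely $\IFTFF{y}{v}{\chi_{B(0,\Rh)}\hat g}$ evaluated at $v_n-v_m$. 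Since $v_n-v_m=(n-m)\,2\pi h = \tilde v_{n-m}$, this equals $g_{n-m}$ by the definition in the statement. Pulling out $\frac{\pi}{\Rh}=\frac{2\pi}{L}$ and summing over $m$ gives $(f\ast g)(v_n) = \frac{2\pi}{L}\sum_m f_m g_{n-m}$, and substituting into the Whittaker-Shannon expansion above produces (\ref{convolution2}).

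The only delicate step is justifying the interchange of the sum over $m$ with the Fourier inversion integral when deriving the expression for $\hat f$ and multiplying it by $\hat g$. Since $f$ has band-limited spectrum in $B(0,\Rh)$, the series $\sum_m f_m\,\de^{-\ri v_m y}$ converges in $L^2(B(0,\Rh))$ (it is essentially the Fourier series of $\frac{\Rh}{\pi}\hat f$ on that interval), and pairing against $\chi_{B(0,\Rh)}\hat g\in L^2(\bbR)$ is continuous in the $L^2$ variable, so the exchange is legitimate. Beyond this mild functional-analytic verification, the argument is purely algebraic and I do not anticipate further obstacles.
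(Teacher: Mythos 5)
Your proof is correct, and it reaches \eqref{convolution2} by a route that is organized differently from the paper's. The paper first replaces $g$ by the truncated kernel $\tilde g=\IFTFF{y}{v}{\hat g\chi_{B(0,\Rh)}}$ using $\supp(\hat f)\subset B(0,\Rh)$, expands \emph{both} $f$ and $\tilde g$ in Whittaker--Shannon series (on the shifted lattice $v_n$ and the unshifted lattice $\tilde v_n$ respectively), and then convolves the two sinc series term by term, citing the sinc identity \eqref{SincProperty}. You instead apply the interpolation formula once, to $f\ast g$ itself, and compute the samples $(f\ast g)(v_n)$ entirely on the Fourier side, reading $\sum_m f_m \de^{-\ri v_m y}$ as the Fourier series of $\tfrac{\Rh}{\pi}\hat f$ on $B(0,\Rh)$ and pairing it with $\hat g$. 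The two computations are dual to one another and rest on the same two facts: only $\hat g\chi_{B(0,\Rh)}$ contributes, and the lattice identity $v_n-v_m=\tilde v_{n-m}$ converts the shifted samples of $f$ into the unshifted samples of $\tilde g$. Your version has the small advantage of avoiding the integral of a product of two sinc functions centered on different lattices (for that step the orthogonality relation \eqref{SincProperty} as cited is really only the lattice special case of the reproducing property of $\sinc\ast\sinc$ that the argument needs), and your closing remark on $L^2$ convergence makes explicit an interchange of sum and integral that the paper leaves implicit. One cosmetic point: you take $\tfrac{\pi}{\Rh}=\tfrac{2\pi}{L}$, i.e.\ $L=2\Rh$, whereas the lemma as stated reads $L>2\Rh$; the normalization actually used in \eqref{convolution2} and in the main text is $L=2\Rh$, so your reading is the intended one.
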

\begin{proof}
By the convolution theorem of the Fourier transform, we have
\begin{equation}
  f\ast g(v) = \int_{-\infty}^{\infty} f(v-v')g(v') \dd v' 
  = \IFTFF{y}{v} {\hat{f}\hat{g}}. 
\end{equation}
Using $\supp (\hat{f}) \subset B(0,\Rh)$, we have   
\begin{equation} \label{212Con}
  f\ast g(v)  =\IFTFF{y}{v}{\hat{f}\hat{g}}
  =\IFTFF{y}{v} {\hat{f}\hat{g} \chi_{B(0,\Rh)}}
  = f \ast \tilde{g}
\end{equation}
where $\tilde{g}(v) = \IFTFF{y}{v}{\hat{g}(y)\chi_{B(0,\Rh)(y)}}$.   
Both $f$ and $\tilde{g}$ have a compactly supported Fourier transform 
contained in $B(0,\Rh)$ result in  
\begin{equation} \label{gPres}
  f(v) = \sum_{n} f_n\sinc\left( \Rh ( v-v_n \right) ,
  \quad 
  \tilde{g}(v) = \sum_{n} g_n\sinc\left( \Rh ( v-\tilde{v}_n \right) ,
\end{equation}
where $g_n = \tilde{g}(\tilde{v}_n)$. 
Plugging  \eqref{gPres} into \eqref{212Con} and making use  of 
using the following property (c.f. Page 13 of \cite{Marks1991})
\begin{equation} \label{SincProperty}
  \int_{-\infty}^{\infty}
  \sinc \left(\frac{L}{2} (v-\tilde{v}_n)\right) 
  \sinc \left( \frac{L}{2} (v-\tilde{v}_{m}) \right)
\dd v = \frac{2\pi}{L} \delta_{nm} ,
\end{equation}
yields \eqref{convolution2}.
 \end{proof}
\end{appendix}


\end{document}